\documentclass{article}

\usepackage{amssymb,amsmath,amsthm,amsfonts}
\usepackage[colorlinks=true,linkcolor=webgreen,filecolor=webbrown,citecolor=webgreen]{hyperref}
\usepackage[margin=1in]{geometry}
\usepackage{amsrefs}
\usepackage{xcolor}

\definecolor{webgreen}{rgb}{0,.5,0}
\definecolor{webbrown}{rgb}{.6,0,0}

\newcommand{\cG}{{\cal G}}
\newcommand{\cA}{{\cal A}}
\newcommand{\cB}{{\cal B}}
\newcommand{\cC}{{\cal C}}

\newcommand{\cQ}{{\cal Q}}
\newcommand{\ZZ}{\ensuremath{{\mathbb Z}}}

\newcommand{\MRev}[1]{~~\href{http://www.ams.org/mathscinet-getitem?mr=#1}{{\bf MR~#1}}}
\newcommand{\doi}[1]{\href{https://dx.doi.org/#1}{#1}}

\title{Modular Constructions of \(g\)-Golomb Rulers}
\author{Aditya Gupta}
\date{June 2026}

\begin{document}

\theoremstyle{plain}
\newtheorem{theorem}{Theorem}
\newtheorem{corollary}[theorem]{Corollary}
\newtheorem{lemma}[theorem]{Lemma}
\newtheorem{proposition}[theorem]{Proposition}
\theoremstyle{definition}
\newtheorem{definition}[theorem]{Definition}
\newtheorem{example}[theorem]{Example}
\theoremstyle{remark}
\newtheorem{remark}[theorem]{Remark}

\maketitle

\begin{abstract}
  A set \(\cG\) of integers is a \(g\)-Golomb ruler if each positive difference appears at most \(g\) times between any 2 elements of the set, and \(G(g,n)\) denotes the minimum diameter of such a ruler with \(n\) marks. We prove a general lemma for passing from certain modular constructions to ordinary \(g\)-Golomb rulers. The key point is that, in a modular \(g\)-Golomb ruler, no cyclic gap length can occur more than \(g\) times. This gives a larger guaranteed cut than the previous average gap argument. We apply this lemma to cyclic relative difference sets, Singer sets, Ruzsa--Spence rulers, and Paley quadratic residues to provide many competing constructions for \(g\)-Golomb Rulers. A computation on the grid \(1\le g\le500\), \(n=g+b\), \(2\le b\le500\), compares the four resulting construction families.
\end{abstract}

\section{Introduction}

A Golomb ruler is a finite set of integers whose positive pairwise differences are all distinct. Equivalently, every positive difference occurs at most once. These objects also appear as Sidon sets, \(B_2\)-sets, and Babcock sets, and are connected to additive number theory, combinatorial design theory, and frequency assignment problems \cite{1984.Atkinson&Hassenklover,1986.Atkinson&Santoro&Urrutia,2004.Obryant}.

For a finite set \(\cG\subset\ZZ\), write
\[
    r_{\cG-\cG}(d):=\#\{(a,c)\in \cG^2:c-a=d\}\qquad(d\ge1).
\]
The set \(\cG\) is a \(g\)-Golomb ruler if
\[
    r_{\cG-\cG}(d)\le g
\]
for every positive integer \(d\). Let \(G(g,n)\) denote the minimum possible diameter of a \(g\)-Golomb ruler with \(n\) marks. 

Many known upper bounds for \(G(g,n)\) come from modular constructions \cite{2021.MartosDazaTrujillo}. A modular ruler gives marks on a circle, and cutting the circle at an empty gap turns it into an ordinary ruler. The standard cut uses only the average gap. Our main observation is that this cut is slightly wasteful. In a modular \(g\)-Golomb ruler, each gap length is itself a modular difference, so no gap length can occur more than \(g\) times. This forces the largest gap to be larger than the average gap alone guarantees.

The following bounds are the main theorems of the paper.

\medskip
\noindent\textbf{General modular extraction:}
Suppose there is a modular Golomb ruler with \(k\) marks modulo \(N\), and suppose \(g\mid N\). If
\[
    n\le k-\left\lfloor\frac g2\right\rfloor,
    \qquad
    n=ag+r,
    \qquad
    0\le r<g,
\]
then
\[
    G(g,n)
    \le
    \frac Ng-
    \left\lceil
        \frac{\frac Ng+\frac{g a(a-1)}2+ra}{n}
    \right\rceil.
\]
This is Theorem~\ref{thm:general-folding}. The term
\[
    \frac{g a(a-1)}2+ra
\]
is the improvement over the average-gap cut.

\medskip
\noindent\textbf{Cyclic RDS construction:}
Let \(q\) be a prime power and let \(d\ge2\). Suppose
\[
    q^{d-2}\mid g,
    \qquad
    \eta:=\frac{g}{q^{d-2}}\mid q^d-1,
    \qquad
    \rho:=\gcd(\eta,q-1).
\]
Then the cyclic relative-difference-set construction gives, for every
\[
    n\le q^{d-1}-\frac{q^{d-2}(\eta-\rho)}2,
    \qquad
    n=ag+r,
    \qquad
    0\le r<g,
\]
the bound
\[
    G(g,n)
    \le
    \frac{q^{d-2}(q^d-1)}{g}
    -
    \left\lceil
    \frac{
        \frac{q^{d-2}(q^d-1)}{g}
        +\frac{g a(a-1)}2+ra
    }{n}
    \right\rceil.
\]
For \(d=2\), this recovers the Bose--Chowla quotient and its folded variants. This is Theorem~\ref{thm:cyclic-rds}.

\medskip
\noindent\textbf{Singer and Ruzsa--Spence constructions:}
If \(q\) is a prime power, \(g\mid q^2+q+1\), and
\[
    n\le q+1-\left\lfloor\frac g2\right\rfloor,
    \qquad
    n=ag+r,
    \qquad
    0\le r<g,
\]
then
\[
    G(g,n)
    \le
    \frac{q^2+q+1}{g}
    -
    \left\lceil
        \frac{\frac{q^2+q+1}{g}+\frac{g a(a-1)}2+ra}{n}
    \right\rceil.
\]
If \(p\) is an odd prime, \(g\mid p(p-1)\), and
\[
    n\le p-1-\left\lfloor\frac g2\right\rfloor,
    \qquad
    n=ag+r,
    \qquad
    0\le r<g,
\]
then
\[
    G(g,n)
    \le
    \frac{p(p-1)}{g}
    -
    \left\lceil
        \frac{\frac{p(p-1)}{g}+\frac{g a(a-1)}2+ra}{n}
    \right\rceil.
\]
When \(g\mid p-1\), this simplifies to
\[
    G(g,p-1)
    \le
    \frac{p(p-1)}{g}
    -
    \left\lceil
        \frac{3p-1-g}{2g}
    \right\rceil.
\]
These are Theorems~\ref{thm:folded-singer} and~\ref{thm:folded-ruzsa}.

\medskip
\noindent\textbf{Paley quadratic residues:}
For an odd prime \(p\ge5\), the nonzero quadratic residues modulo \(p\) form a modular \(g\)-Golomb ruler with \((p-1)/2\) marks, where
\[
    g=\left\lfloor\frac{p-1}{4}\right\rfloor.
\]
we have, for every \(n\le(p-1)/2\) with \(n=ag+r\), \(0\le r<g\),
\[
    G(g,n)
    \le
    p-
    \left\lceil
        \frac{p+\frac{g a(a-1)}2+ra}{n}
    \right\rceil.
\]
The same bound is valid for every larger target multiplicity. This is Corollary~\ref{cor:paley-gap-bound}.

\medskip
 Section~\ref{sec:extraction} proves the general extraction theorem. Section~\ref{sec:rds} gives the cyclic RDS construction and derives its bound. Section~\ref{sec:singer-ruzsa} records the Singer and Ruzsa--Spence bounds. Section~\ref{sec:paley} records the classical Paley quadratic-residue input and derives the resulting bound. Section~\ref{sec:computation} compares these constructions on a \(500\times500\) grid.

\section{Modular Rulers to $g$-Golomb Rulers}\label{sec:extraction}

We use residues modulo \(N\) as marks on a circle of length \(N\). A set \(\cA\) of residues modulo \(N\) is a modular \(g\)-Golomb ruler modulo \(N\) if every nonzero residue modulo \(N\) occurs at most \(g\) times as a difference
\[
    a-a'\pmod N,
    \qquad
    a,a'\in\cA,
    \quad
    a\ne a'.
\]
The case \(g=1\) is a modular Golomb ruler.

The proof has two steps. First, we may fold a modular Golomb ruler modulo a divisor, losing only a few marks. Second, once we have a modular \(g\)-Golomb ruler on a circle, the multiplicity condition forces a large gap, and cutting at that gap gives an ordinary ruler.

\begin{lemma}\label{lem:fold-delete}
Let \(\cA\) be a modular Golomb ruler with \(k\) marks modulo \(N\), and suppose \(g\mid N\). Reduce every mark of \(\cA\) modulo \(N/g\). After deleting at most \(\lfloor g/2\rfloor\) marks, the remaining residues form a modular \(g\)-Golomb ruler modulo \(N/g\). Consequently, for every
\[
    n\le k-\left\lfloor \frac{g}{2}\right\rfloor,
\]
there is a modular \(g\)-Golomb ruler with \(n\) marks modulo \(N/g\).
\end{lemma}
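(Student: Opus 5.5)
Set \(M:=N/g\) and let \(\pi:\ZZ/N\ZZ\to\ZZ/M\ZZ\) be reduction. Reduction can cause two kinds of trouble. Two marks of \(\cA\) may land on the same residue, so the reduced list is a multiset rather than a set. And a nonzero difference modulo \(M\) may gather several differences modulo \(N\). The plan is to handle the first by counting collisions and deleting one mark from each colliding pair, and the second by a direct fibre count.

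\emph{Step 1 (collisions).} Suppose \(a\ne a'\) in \(\cA\) have \(\pi(a)=\pi(a')\). Then \(a-a'\equiv jM \pmod N\) for some \(1\le j\le g-1\). Since \(\cA\) is a modular Golomb ruler, each nonzero residue modulo \(N\) occurs at most once as an ordered difference. An unordered colliding pair \(\{a,a'\}\) uses both of the ordered differences \(jM\) and \(-jM=(g-j)M\).
\begin{itemize}
\item If \(g\) is even and \(j=g/2\), then \(jM=-jM\), so this single residue would be realised twice. That is impossible, so no colliding pair has \(j=g/2\).
\item Otherwise the unordered pairs \(\{j,g-j\}\) with \(j\ne g-j\) number \(\lfloor (g-1)/2\rfloor\), and each can be used by at most one colliding pair.
\end{itemize}
Hence there are at most \(\lfloor (g-1)/2\rfloor\le\lfloor g/2\rfloor\) colliding unordered pairs. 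Deleting one mark from each leaves a subset \(\cA'\subseteq\cA\) of size at least \(k-\lfloor g/2\rfloor\) on which \(\pi\) is injective. So \(\pi(\cA')\) is a genuine set of residues modulo \(M\).

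\emph{Step 2 (multiplicity).} Fix a nonzero residue \(\delta\) modulo \(M\). Ordered pairs \((b,b')\) of distinct elements of \(\pi(\cA')\) with \(b-b'\equiv\delta\) correspond bijectively, by injectivity of \(\pi\) on \(\cA'\), to ordered pairs \((a,a')\) in \(\cA'\) with \(a-a'\equiv \delta+jM\pmod N\) for some \(0\le j\le g-1\). Each residue \(\delta+jM\) is nonzero modulo \(N\) because \(\delta\not\equiv0\pmod M\). The Golomb property therefore allows at most one ordered pair per value of \(j\), giving at most \(g\) pairs in total. Thus \(\pi(\cA')\) is a modular \(g\)-Golomb ruler modulo \(M\).

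\emph{Step 3 (the consequence).} Removing marks never increases any difference count, so every subset of \(\pi(\cA')\) is still a modular \(g\)-Golomb ruler modulo \(M\). Since \(|\pi(\cA')|\ge k-\lfloor g/2\rfloor\), we obtain rulers with every \(n\le k-\lfloor g/2\rfloor\) marks.

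The only delicate point is Step 1: we must pair \(j\) with \(g-j\) and rule out the self-paired case \(j=g/2\) to get the collision count down to \(\lfloor g/2\rfloor\). The rest is a routine fibre count.
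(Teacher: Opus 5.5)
Your proof is correct and follows the paper's argument. You pair the collision residues \(jM\) with \((g-j)M\) and use the Golomb property to allow at most one collision per pair. You then delete one endpoint of each colliding pair and bound each projected multiplicity by the \(g\) lifts \(\delta+jM\). Two small refinements go beyond what the paper writes: you rule out the self-opposite class \(j=g/2\), which sharpens the deletion count to \(\lfloor (g-1)/2\rfloor\), and you state explicitly that \(\pi\) is injective on \(\cA'\), which the paper's fibre count leaves implicit.
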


\begin{proof}
Two distinct marks become equal after reduction modulo \(N/g\) precisely when their difference is a nonzero multiple of \(N/g\) modulo \(N\). The possible nonzero multiples are
\[
    \frac Ng,2\frac Ng,\ldots,(g-1)\frac Ng.
\]
The multiples \(tN/g\) and \((g-t)N/g\) describe the same collision pair, but in opposite directions. Thus these possible collision types split into at most
\[
    \left\lfloor \frac g2\right\rfloor
\]
opposite pairs.

Because \(\cA\) is a modular Golomb ruler modulo \(N\), each nonzero residue modulo \(N\) occurs as a difference at most once. Hence each opposite pair can cause at most one collision. Deleting one endpoint from each colliding pair removes all collisions, and deletes at most \(\lfloor g/2\rfloor\) marks.

It remains to bound difference multiplicities after folding. Fix a nonzero residue \(d\) modulo \(N/g\). The residues modulo \(N\) that reduce to \(d\) modulo \(N/g\) are
\[
    d,
    d+\frac Ng,
    d+2\frac Ng,
    \ldots,
    d+(g-1)\frac Ng.
\]
There are \(g\) of them, and each occurs as a difference among marks of \(\cA\) at most once. Therefore the difference \(d\) modulo \(N/g\) occurs at most \(g\) times after folding. The remaining set is a modular \(g\)-Golomb ruler modulo \(N/g\).
\end{proof}

\begin{lemma}\label{lem:gap-cut}
Let \(\cC\) be a modular \(g\)-Golomb ruler with \(n\) marks modulo \(N\). Write
\[
    n=ag+r,
    \qquad
    0\le r<g.
\]
Then there is an ordinary \(g\)-Golomb ruler with \(n\) marks and diameter at most
\[
    N-
    \left\lceil
        \frac{N+\frac{g a(a-1)}2+ra}{n}
    \right\rceil.
\]
The same bound is valid for every larger target multiplicity.
\end{lemma}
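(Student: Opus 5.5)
Order the marks of \(\cC\) cyclically as representatives \(0\le c_1<c_2<\dots<c_n<N\), and let the cyclic gaps be \(\ell_i=c_{i+1}-c_i\) for \(i<n\) and \(\ell_n=N+c_1-c_n\). These are positive integers with \(\sum_i \ell_i=N\). If we cut the circle at the largest gap \(\ell_{\max}\), say \(\ell_{\max}=\ell_j\), and list the marks starting at \(c_{j+1}\), we obtain integers whose diameter is \(N-\ell_{\max}\). The plan is therefore to verify that the cut ruler is an ordinary \(g\)-Golomb ruler, and then to lower-bound \(\ell_{\max}\).

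\textbf{Step 1 (cutting preserves the multiplicity bound).} After the cut, the marks are lifted to integers \(x_1<\dots<x_n\) with \(x_n-x_1<N\). Any positive integer difference \(x_t-x_s=d\) then satisfies \(0<d<N\). Distinct ordered pairs \((x_s,x_t)\) give distinct ordered pairs of residues whose difference is \(d \bmod N\), and \(d\not\equiv0\). Hence \(r(d)\) is at most the number of modular representations of \(d\), which is at most \(g\). The same argument gives the bound for any larger multiplicity target, trivially.

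\textbf{Step 2 (gap lengths repeat at most \(g\) times).} Each gap \(\ell_i\) equals the modular difference \(c_{i+1}-c_i\), realized by the ordered pair \((c_i,c_{i+1})\). Distinct \(i\) give distinct ordered pairs when \(n\ge2\); the degenerate cases \(n\le 1\) are handled separately. So each value \(v\in\{1,\dots,N-1\}\) occurs as a gap at most \(g\) times.

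\textbf{Step 3 (counting).} Let \(L=\ell_{\max}\). The \(n\) gaps are integers in \([1,L]\), each value used at most \(g\) times. To maximize their sum, take the value \(L\) \(g\) times, \(L-1\) \(g\) times, and so on. This gives \(a\) full blocks plus \(r\) gaps equal to \(L-a\), so
\[
N=\sum\ell_i\le g\sum_{t=0}^{a-1}(L-t)+r(L-a)=nL-\frac{ga(a-1)}2-ra.
\]
This requires \(L-a\ge1\); if \(L\le a\) the counting only gets more restrictive, so the inequality still holds. A rigorous form is to sort the gaps decreasingly as \(\ell_{(1)}\ge\dots\ge\ell_{(n)}\) and note that \(\ell_{(m)}\le L-\lfloor (m-1)/g\rfloor\), since at most \(g\) gaps share each value. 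Summing over \(m\) gives exactly \(nL-ga(a-1)/2-ra\). Hence \(L\ge (N+ga(a-1)/2+ra)/n\), and since \(L\) is an integer, \(L\) is at least the ceiling. The diameter \(N-L\) then matches the claim.

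The main obstacle is Step 3's sorted-gap bound. The key observation is that the \(m\)-th largest gap is at most \(L-\lfloor(m-1)/g\rfloor\), because the values \(L,L-1,\dots\) can each absorb at most \(g\) gaps. Summing \(\lfloor(m-1)/g\rfloor\) over \(m=1,\dots,n\) gives \(g\cdot\binom a2+ra\), which produces the stated term.
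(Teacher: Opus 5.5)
Your proof is correct and follows the paper's argument. You cut at the largest gap, observe that gap lengths are nonzero modular differences and so repeat at most \(g\) times, and bound the gap sum by \(nL-\frac{ga(a-1)}2-ra\). Your sorted-gap inequality \(\ell_{(m)}\le L-\lfloor (m-1)/g\rfloor\) is a rigorous version of the extremal-sum step that the paper simply asserts.
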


\begin{proof}
Place the \(n\) marks of \(\cC\) around a circle of length \(N\), and let
\[
    \delta_1,\ldots,\delta_n
\]
be the gaps between consecutive marks. These are positive integers satisfying
\[
    \delta_1+\cdots+\delta_n=N.
\]
Each gap length is a nonzero modular difference between two marks. Since \(\cC\) is a modular \(g\)-Golomb ruler, no positive gap length occurs more than \(g\) times.

Let
\[
    \Delta:=\max_i\delta_i.
\]
Among \(n=ag+r\) positive integers no larger than \(\Delta\), with no value repeated more than \(g\) times, the largest possible total sum is obtained by taking
\[
    g\text{ copies of }\Delta,
    \quad
    g\text{ copies of }\Delta-1,
    \quad\ldots,\quad
    g\text{ copies of }\Delta-a+1,
\]
and then \(r\) copies of \(\Delta-a\). Thus
\[
    N
    \le
    g\Delta+g(\Delta-1)+\cdots+g(\Delta-a+1)+r(\Delta-a).
\]
The right side equals
\[
    n\Delta-\frac{g a(a-1)}{2}-ra.
\]
Therefore
\[
    \Delta
    \ge
    \left\lceil
        \frac{N+\frac{g a(a-1)}2+ra}{n}
    \right\rceil.
\]

Cut at a gap of length \(\Delta\), remove that empty gap, and unwrap the remaining elements into a line. The resulting set has diameter at most \(N-\Delta\). Every ordinary difference in the unwrapped set is also a nonzero difference modulo \(N\), so it occurs at most \(g\) times. Hence the resulting set is an ordinary \(g\)-Golomb ruler. The final sentence follows from \(G(g,n)\ge G(g+1,n)\).
\end{proof}

\begin{theorem}\label{thm:general-folding}
Suppose there is a modular Golomb ruler with \(k\) marks modulo \(N\), and suppose \(g\mid N\). If
\[
    n\le k-\left\lfloor \frac{g}{2}\right\rfloor,
    \qquad
    n=ag+r,
    \qquad
    0\le r<g,
\]
then
\[
    G(g,n)
    \le
    \frac Ng-
    \left\lceil
        \frac{\frac Ng+\frac{g a(a-1)}2+ra}{n}
    \right\rceil.
\]
The same bound is valid for every larger target multiplicity.
\end{theorem}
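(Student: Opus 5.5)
The plan is to compose the two lemmas just proved. Start from the given modular Golomb ruler \(\cA\) with \(k\) marks modulo \(N\), where \(g\mid N\). Lemma~\ref{lem:fold-delete} reduces \(\cA\) modulo \(N/g\) and deletes at most \(\lfloor g/2\rfloor\) marks, giving a modular \(g\)-Golomb ruler modulo \(N/g\). Since \(n\le k-\lfloor g/2\rfloor\), the consequence clause of that lemma supplies a modular \(g\)-Golomb ruler \(\cC\) with exactly \(n\) marks modulo \(N/g\). Here I will note explicitly that discarding further marks from a modular \(g\)-Golomb ruler keeps it a modular \(g\)-Golomb ruler, because removing marks cannot increase any difference count. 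This is what lets us reach any \(n\) up to the stated bound.

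Next I apply Lemma~\ref{lem:gap-cut} to \(\cC\), with the modulus \(N/g\) in place of \(N\) and the same decomposition \(n=ag+r\), \(0\le r<g\). That lemma gives an ordinary \(g\)-Golomb ruler with \(n\) marks and diameter at most
\[
    \frac Ng-\left\lceil\frac{\frac Ng+\frac{g a(a-1)}2+ra}{n}\right\rceil .
\]
Since \(G(g,n)\) is the minimum diameter over all such rulers, this bounds \(G(g,n)\). The final sentence of the theorem then follows from the monotonicity \(G(g',n)\le G(g,n)\) for \(g'\ge g\), exactly as at the end of Lemma~\ref{lem:gap-cut}.

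There is essentially no obstacle, since all the work is in the lemmas. The only points needing care are bookkeeping. First, \(n\ge1\), so \(\cC\) is nonempty. Second, the gap-cut lemma implicitly needs at least one mark, and \(N/g\) must be at least the number of marks, which holds automatically because the marks are distinct residues. Third, in degenerate small cases such as \(n=1\), the bound reads \(N/g-\lceil N/g\rceil=0\), which is consistent.
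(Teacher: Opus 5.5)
Your proposal is correct and is essentially the paper's proof: fold and delete via Lemma~\ref{lem:fold-delete} to get a modular \(g\)-Golomb ruler with \(n\) marks modulo \(N/g\), then apply Lemma~\ref{lem:gap-cut} with modulus \(N/g\). Your extra remarks, that passing to a subset preserves the modular \(g\)-Golomb property and that \(G\) is monotone in \(g\) for the final sentence, make explicit what the paper leaves implicit.
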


\begin{proof}
By Lemma~\ref{lem:fold-delete}, folding modulo \(N/g\) and deleting collisions gives a modular \(g\)-Golomb ruler with \(n\) marks modulo \(N/g\). Lemma~\ref{lem:gap-cut} then converts it into an ordinary \(g\)-Golomb ruler with the claimed diameter.
\end{proof}

\begin{remark}
The standard average-gap cut only guarantees a removable gap of length at least \(\lceil N/n\rceil\). Lemma~\ref{lem:gap-cut} replaces this by
\[
    \left\lceil
        \frac{N+\frac{g a(a-1)}2+ra}{n}
    \right\rceil,
    \qquad
    n=ag+r,
    \qquad
    0\le r<g.
\]
The improvement is sharp if one only uses the fact that no gap length appears more than \(g\) times.
\end{remark}

\section{The cyclic RDS construction}\label{sec:rds}

We use the following classical cyclic relative-difference-set construction
of Elliott and Butson \cite{1966.ElliottButson}. This family includes the
usual Bose--Chowla quotient as the two-dimensional case.

Fix a prime power \(q\) and an integer \(d\ge2\), and define
\[
    N:=q^d-1.
\]
For a set \(\cB\) of residues modulo \(N\), write
\[
    r_{\cB-\cB}(t)
    :=
    \#\{(b,b')\in \cB^2:\ b-b'\equiv t \pmod N\}.
\]
The construction gives a set \(\cB\) of residues modulo \(N\) with
\[
    |\cB|=q^{d-1},
\]
such that
\[
    r_{\cB-\cB}\left(j\frac{N}{q-1}\right)=0
    \qquad
    \text{for }1\le j\le q-2,
\]
and
\[
    r_{\cB-\cB}(t)=q^{d-2}
\]
for every other nonzero residue \(t\) modulo \(N\). Equivalently, the
nonzero multiples
\[
    \frac{N}{q-1},\ 2\frac{N}{q-1},\ \ldots,\ (q-2)\frac{N}{q-1}
\]
are forbidden differences, while every other nonzero residue occurs
exactly \(q^{d-2}\) times as an ordered difference.

In standard notation, this is a cyclic relative difference set with
parameters
\[
    \left(
    \frac{q^d-1}{q-1},\ q-1,\ q^{d-1},\ q^{d-2}
    \right).
\]
\begin{theorem}\label{thm:cyclic-rds}
Let \(q\) be a prime power and let \(d\ge2\). Suppose
\[
    q^{d-2}\mid g,
    \qquad
    \eta:=\frac{g}{q^{d-2}}\mid q^d-1.
\]
Define
\[
    \rho:=\gcd(\eta,q-1).
\]
Then there is a modular \(g\)-Golomb ruler with at least
\[
    q^{d-1}-\frac{q^{d-2}(\eta-\rho)}2
\]
marks modulo
\[
    \frac{q^{d-2}(q^d-1)}{g}.
\]
Consequently, for every
\[
    n\le q^{d-1}-\frac{q^{d-2}(\eta-\rho)}2,
\]
writing
\[
    n=ag+r,
    \qquad
    0\le r<g,
\]
one has
\[
    G(g,n)
    \le
    \frac{q^{d-2}(q^d-1)}{g}
    -
    \left\lceil
    \frac{
        \frac{q^{d-2}(q^d-1)}{g}
        +\frac{g a(a-1)}2+ra
    }{n}
    \right\rceil.
\]
\end{theorem}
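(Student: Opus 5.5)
The plan is to imitate Lemma~\ref{lem:fold-delete}, but to fold the relative difference set \(\cB\) itself instead of a modular Golomb ruler. We then finish with Lemma~\ref{lem:gap-cut}. Put \(N=q^d-1\) and
\[
    M:=\frac N\eta=\frac{q^{d-2}(q^d-1)}{g}.
\]
This is an integer because \(\eta\mid q^d-1\). Reduce every element of \(\cB\) modulo \(M\).

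\textbf{Multiplicity after folding.} Fix a nonzero residue \(t\) modulo \(M\). Its lifts modulo \(N\) are \(t+jM\) for \(0\le j\le\eta-1\), and all of them are nonzero modulo \(N\). By the Elliott--Butson property, each lift occurs at most \(q^{d-2}\) times as an ordered difference in \(\cB\). So \(t\) occurs at most \(\eta q^{d-2}=g\) times after folding. Once coinciding marks are removed, the folded set is therefore a modular \(g\)-Golomb ruler modulo \(M\).

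\textbf{Counting collisions.} Two distinct marks \(b,b'\) become equal modulo \(M\) exactly when \(b-b'\equiv jM\pmod N\) for some \(1\le j\le\eta-1\). The residue \(jM=jN/\eta\) is a multiple of \(N/(q-1)\) if and only if \(j(q-1)/\eta\in\ZZ\). This happens exactly when \((\eta/\rho)\mid j\), which gives \(\rho-1\) values of \(j\) in \([1,\eta-1]\). These residues are forbidden differences, so they contribute no collisions. Each of the remaining \(\eta-\rho\) values of \(j\) contributes exactly \(q^{d-2}\) ordered pairs. Hence the number of ordered colliding pairs is \(q^{d-2}(\eta-\rho)\), and the number of unordered colliding pairs is \(q^{d-2}(\eta-\rho)/2\). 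This count is valid even when \(j=\eta/2\) is self-opposite, because then an unordered pair is still counted twice among the ordered pairs.

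\textbf{Deletion and conclusion.} Delete one endpoint from each unordered colliding pair. This removes at most \(q^{d-2}(\eta-\rho)/2\) marks and leaves distinct residues. By the multiplicity bound above, the result is a modular \(g\)-Golomb ruler modulo \(M\) with at least \(q^{d-1}-q^{d-2}(\eta-\rho)/2\) marks. Deleting further marks keeps the \(g\)-Golomb property, so we obtain such a ruler with exactly \(n\) marks for every admissible \(n\). Applying Lemma~\ref{lem:gap-cut} with modulus \(M\) gives the stated bound on \(G(g,n)\).

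The main obstacle is the collision count: identifying precisely which multiples of \(M\) coincide with the forbidden multiples of \(N/(q-1)\), which is where \(\rho=\gcd(\eta,q-1)\) enters.
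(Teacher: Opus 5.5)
Your proposal is correct and follows essentially the same route as the paper: fold \(\cB\) modulo \(N/\eta\), count the \(\eta-\rho\) non-forbidden collision residues, each contributing \(q^{d-2}\) ordered pairs, delete one endpoint per unordered colliding pair, bound the folded multiplicity by \(\eta q^{d-2}=g\), and finish with Lemma~\ref{lem:gap-cut}. The only minor difference is the self-opposite case: the paper observes that \(N/2\), when it exists, is a forbidden residue, whereas you note that halving from ordered to unordered pairs is valid in any case; both are sound.
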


\begin{proof}
Let
\[
    N:=q^d-1.
\]
Start with the cyclic RDS set \(\cB\) described above. We reduce its residues modulo
\[
    \frac{N}{\eta}
    =
    \frac{q^{d-2}(q^d-1)}{g}.
\]
Two original marks become equal after this reduction exactly when their difference is one of the nonzero residues
\[
    \frac{N}{\eta},\ 2\frac{N}{\eta},\ \ldots,\ (\eta-1)\frac{N}{\eta}.
\]
The forbidden residues in the cyclic RDS are the nonzero multiples of
\[
    \frac{N}{q-1}.
\]
Among the \(\eta\) residues
\[
    0,\frac{N}{\eta},2\frac{N}{\eta},\ldots,(\eta-1)\frac{N}{\eta},
\]
exactly \(\rho=\gcd(\eta,q-1)\) are also multiples of \(N/(q-1)\). Hence exactly \(\eta-\rho\) of the possible collision residues are not forbidden. Each of these residues occurs exactly \(q^{d-2}\) times as a difference of elements of \(\cB\).

Therefore the number of colliding ordered pairs is
\[
    q^{d-2}(\eta-\rho).
\]
Opposite differences describe the same unordered collision pair. The only possible self-opposite collision residue is \(N/2\), and when it exists it is forbidden. Thus the number of unordered collision pairs is
\[
    \frac{q^{d-2}(\eta-\rho)}2.
\]
Delete one mark from each colliding pair. The remaining set has at least
\[
    q^{d-1}-\frac{q^{d-2}(\eta-\rho)}2
\]
marks, and its residues are distinct modulo \(N/\eta\).

It remains to check the difference multiplicity after reduction. Fix a nonzero residue modulo \(N/\eta\). It comes from exactly \(\eta\) residues modulo \(N\). Each lift either lies in the forbidden subgroup, in which case it occurs zero times as a difference of elements of \(\cB\), or lies outside it, in which case it occurs exactly \(q^{d-2}\) times. Hence, before deleting marks, the projected difference occurs at most
\[
    \eta q^{d-2}=g
\]
times. Thus the remaining projected set is a modular \(g\)-Golomb ruler with the stated number of marks and the stated modulus.

The bound follows by applying Lemma~\ref{lem:gap-cut} to this modular \(g\)-Golomb ruler and then passing to any subset of \(n\) marks.
\end{proof}

\begin{remark}
For \(d=2\), Theorem~\ref{thm:cyclic-rds} gives the standard Bose--Chowla construction. Indeed, then \(q^{d-2}=1\), so \(\eta=g\), and the theorem applies whenever
\[
    g\mid q^2-1.
\]
It gives at least
\[
    q-\frac{g-\gcd(g,q-1)}2
\]
marks modulo \((q^2-1)/g\). This recovers the usual no-deletion Bose--Chowla quotient when \(g\mid q-1\), and it refines the standard folded Bose--Chowla deletion bound in the remaining cases.
\end{remark}

\section{Singer and Ruzsa--Spence constructions}\label{sec:singer-ruzsa}

We next record the two classical modular Golomb ruler inputs used in the comparison. Singer gives \(q+1\) marks modulo \(q^2+q+1\) \cite{1938.Singer}. Ruzsa--Spence gives \(p-1\) marks modulo \(p(p-1)\) \cite{1996.Ruzsa}. In both cases, Theorem~\ref{thm:general-folding} followed by Lemma~\ref{lem:gap-cut} gives the same refined gap extraction.

\begin{theorem}\label{thm:folded-singer}
Let \(q\) be a prime power and suppose \(g\mid q^2+q+1\). If
\[
    n\le q+1-\left\lfloor \frac{g}{2}\right\rfloor,
    \qquad
    n=ag+r,
    \qquad
    0\le r<g,
\]
then
\[
    G(g,n)
    \le
    \frac{q^2+q+1}{g}
    -
    \left\lceil
        \frac{\frac{q^2+q+1}{g}+\frac{g a(a-1)}2+ra}{n}
    \right\rceil.
\]
\end{theorem}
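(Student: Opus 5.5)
The plan is to apply Theorem~\ref{thm:general-folding} directly, with the Singer difference set as the input modular Golomb ruler. Take the classical Singer construction \cite{1938.Singer}. For a prime power \(q\), it gives a set \(\cA\) of \(k=q+1\) residues modulo \(N=q^2+q+1\) such that every nonzero residue modulo \(N\) occurs exactly once as a difference \(a-a'\) with \(a,a'\in\cA\). This is the perfect difference set coming from the lines of \(\mathrm{PG}(2,q)\) under a Singer cycle. In particular, every nonzero residue occurs at most once, so \(\cA\) is a modular Golomb ruler with \(k=q+1\) marks modulo \(N\).

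Next we check the hypotheses of Theorem~\ref{thm:general-folding}. By assumption \(g\mid N=q^2+q+1\), and the constraint \(n\le q+1-\lfloor g/2\rfloor\) is exactly \(n\le k-\lfloor g/2\rfloor\). Writing \(n=ag+r\) with \(0\le r<g\), Theorem~\ref{thm:general-folding} gives
\[
    G(g,n)\le \frac Ng-\left\lceil\frac{\frac Ng+\frac{ga(a-1)}2+ra}{n}\right\rceil,
\]
and substituting \(N=q^2+q+1\) yields the claimed bound. Concretely, this means: first use Lemma~\ref{lem:fold-delete} to fold modulo \(N/g\) and delete at most \(\lfloor g/2\rfloor\) colliding marks, keeping any \(n\) of the surviving marks; then use Lemma~\ref{lem:gap-cut} to cut at the forced large gap.

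Two points deserve a brief check.
\begin{itemize}
  \item The case \(n=0\) is trivial, so we may assume \(n\ge1\) and the division by \(n\) is meaningful.
  \item Passing to a subset of the folded modular \(g\)-Golomb ruler keeps it a modular \(g\)-Golomb ruler, so choosing exactly \(n\) marks is legitimate. Lemma~\ref{lem:fold-delete} already states the existence of such a ruler for every \(n\le k-\lfloor g/2\rfloor\).
\end{itemize}

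The only step carrying any real content is the existence of the Singer difference set, which we cite rather than prove. Everything else is direct substitution into Theorem~\ref{thm:general-folding}, so I do not expect any genuine obstacle.
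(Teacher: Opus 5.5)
Your proposal is correct and matches the paper's proof. The paper likewise applies Theorem~\ref{thm:general-folding} with \(N=q^2+q+1\), folding divisor \(g\), and the Singer perfect difference set (\(k=q+1\) marks) as the input modular Golomb ruler; your extra checks on \(n=0\) and on passing to an \(n\)-mark subset are harmless and already covered by Lemma~\ref{lem:fold-delete}.
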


\begin{proof}
Apply Theorem~\ref{thm:general-folding} with \(N=q^2+q+1\) and with the folding divisor equal to \(g\), using the Singer modular Golomb ruler.
\end{proof}

\begin{theorem}\label{thm:folded-ruzsa}
Let \(p\) be an odd prime and suppose \(g\mid p(p-1)\). If
\[
    n\le p-1-\left\lfloor \frac{g}{2}\right\rfloor,
    \qquad
    n=ag+r,
    \qquad
    0\le r<g,
\]
then
\[
    G(g,n)
    \le
    \frac{p(p-1)}{g}
    -
    \left\lceil
        \frac{\frac{p(p-1)}{g}+\frac{g a(a-1)}2+ra}{n}
    \right\rceil.
\]
\end{theorem}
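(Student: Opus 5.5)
The plan is to apply Theorem~\ref{thm:general-folding} directly with \(N=p(p-1)\) and \(k=p-1\), exactly as in the proof of Theorem~\ref{thm:folded-singer}. The only input needed is that the Ruzsa--Spence construction \cite{1996.Ruzsa} is a modular Golomb ruler with \(p-1\) marks modulo \(p(p-1)\). So the first step is to recall this construction explicitly.

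Fix a primitive root \(\theta\) modulo \(p\). For each \(i\in\{1,\ldots,p-1\}\), let \(x_i\) be the unique residue modulo \(p(p-1)\) with
\[
    x_i\equiv i \pmod{p-1},
    \qquad
    x_i\equiv \theta^i \pmod p.
\]
This uses the Chinese remainder theorem, since \(\gcd(p,p-1)=1\). We take \(\cA=\{x_1,\ldots,x_{p-1}\}\).

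The sum of two marks determines them. Suppose \(x_i+x_j\equiv x_k+x_l \pmod{p(p-1)}\). Reducing modulo \(p-1\) gives \(i+j\equiv k+l\), so \(\theta^i\theta^j\equiv\theta^k\theta^l \pmod p\). Reducing modulo \(p\) gives \(\theta^i+\theta^j\equiv\theta^k+\theta^l\). Hence \(\{\theta^i,\theta^j\}\) and \(\{\theta^k,\theta^l\}\) are the two roots of the same quadratic over \(\mathbb F_p\), so they coincide as multisets. Since \(\theta\) has order \(p-1\), this gives \(\{i,j\}=\{k,l\}\).

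Distinct sums are equivalent to distinct differences: \(x_i-x_k\equiv x_l-x_j\) is the same relation as \(x_i+x_j\equiv x_k+x_l\). Therefore every nonzero residue modulo \(p(p-1)\) occurs at most once as a difference, and \(\cA\) is a modular Golomb ruler. Its \(p-1\) marks are distinct because their residues modulo \(p-1\) are distinct.

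Now apply Theorem~\ref{thm:general-folding} with \(N=p(p-1)\), \(k=p-1\), and the hypothesis \(g\mid p(p-1)\). The conditions \(n\le p-1-\lfloor g/2\rfloor\) and \(n=ag+r\) with \(0\le r<g\) are exactly the theorem's hypotheses. Substituting \(N/g=p(p-1)/g\) yields the stated bound.

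There is essentially no obstacle, since the substantive work was done in Lemmas~\ref{lem:fold-delete} and~\ref{lem:gap-cut}. The one point needing care is the Sidon property of the Ruzsa--Spence set, and in particular the step from equal sum and equal product modulo \(p\) back to equal index sets. That step uses the fact that \(i\mapsto\theta^i\) is injective on \(\ZZ/(p-1)\).
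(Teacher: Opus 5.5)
Your proposal is correct and follows the paper's proof, which is the one-line application of Theorem~\ref{thm:general-folding} with \(N=p(p-1)\), \(k=p-1\), and folding divisor \(g\). The only difference is that you also prove the Sidon property of the Ruzsa--Spence set, using the equal-sum-and-product argument modulo \(p\), whereas the paper simply cites \cite{1996.Ruzsa}; your verification is sound.
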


\begin{proof}
Apply Theorem~\ref{thm:general-folding} with \(N=p(p-1)\) and with the folding divisor equal to \(g\), using the Ruzsa--Spence modular Golomb ruler.
\end{proof}

\begin{corollary}\label{cor:ruzsa-diagonal}
Let \(p\) be an odd prime, and suppose \(g\mid p-1\). Then
\[
    G(g,p-1)
    \le
    \frac{p(p-1)}{g}
    -
    \left\lceil
        \frac{3p-1-g}{2g}
    \right\rceil.
\]
In particular, for fixed \(g\),
\[
    G(g,p-1)
    \le
    \frac{p(p-1)}{g}-\frac{3p}{2g}+O_g(1).
\]
\end{corollary}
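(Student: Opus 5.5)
The plan is to apply Lemma~\ref{lem:gap-cut} directly with \(n=p-1\). Theorem~\ref{thm:folded-ruzsa} cannot be used as stated, since it only allows \(n\le p-1-\lfloor g/2\rfloor\). I will show instead that when \(g\mid p-1\), folding the Ruzsa--Spence ruler modulo \(p(p-1)/g\) produces no collisions at all, so no marks need to be deleted.

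\textbf{Step 1 (no collisions).} I will use the explicit form of the Ruzsa--Spence ruler: fix a primitive root \(\theta\) modulo \(p\), and take the marks \(a_x\) (\(1\le x\le p-1\)) defined by the CRT conditions
\[
a_x\equiv x \pmod{p-1},\qquad a_x\equiv \theta^x \pmod p,
\]
viewed modulo \(p(p-1)\). The residues \(a_x \bmod p\) are the distinct nonzero values \(\theta^x\). Hence every difference \(a_x-a_y\) with \(x\ne y\) is nonzero modulo \(p\). Since \(g\mid p-1\), the folding modulus \(M:=p(p-1)/g\) is itself a multiple of \(p\), so every nonzero multiple of \(M\) is \(\equiv0\pmod p\). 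Therefore no difference of two distinct marks is a multiple of \(M\), and the \(p-1\) marks remain distinct after reduction modulo \(M\).

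\textbf{Step 2 (multiplicities).} The multiplicity count from the last paragraph of the proof of Lemma~\ref{lem:fold-delete} applies verbatim. Each nonzero residue modulo \(M\) has exactly \(g\) lifts modulo \(p(p-1)\), and each lift occurs at most once as a difference. So we obtain a modular \(g\)-Golomb ruler with \(p-1\) marks modulo \(M\).

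\textbf{Step 3 (arithmetic).} Apply Lemma~\ref{lem:gap-cut} with \(n=p-1\), \(a=(p-1)/g\), \(r=0\). The numerator is
\[
\frac{p(p-1)}g+\frac{(p-1)(p-1-g)}{2g}.
\]
Dividing by \(p-1\) gives
\[
\frac pg+\frac{p-1-g}{2g}=\frac{3p-1-g}{2g},
\]
which is the claimed bound. The asymptotic form follows by dropping the ceiling, which changes the bound by at most \(1\), and absorbing the term \((1+g)/(2g)\) into \(O_g(1)\).

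The only real obstacle is Step~1. It depends on the precise normalization of the Ruzsa--Spence construction, specifically that the marks are distinct nonzero residues modulo \(p\). If the paper's version uses a different but equivalent normalization, I would check the same property there: the projection of the marks to \(\ZZ/p\) is injective.
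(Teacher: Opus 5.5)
Your proposal is correct and follows the same route as the paper: you apply Lemma~\ref{lem:gap-cut} with \(n=p-1\), \(a=(p-1)/g\), \(r=0\) to the Ruzsa--Spence ruler reduced modulo \(p(p-1)/g\), and your arithmetic for \(\frac{3p-1-g}{2g}\) matches the paper's. The one difference is that the paper simply asserts that this quotient keeps all \(p-1\) marks, whereas you justify it: since \(p\mid p(p-1)/g\), the reduction still determines the residue modulo \(p\), and that residue is injective on the marks. You also correctly note that Theorem~\ref{thm:folded-ruzsa} itself does not reach \(n=p-1\) when \(g\ge2\).
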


\begin{proof}
The Ruzsa--Spence quotient for \(g\mid p-1\) gives a modular \(g\)-Golomb ruler with \(p-1\) marks modulo \(p(p-1)/g\). Apply Lemma~\ref{lem:gap-cut}. Since \(p-1=ga\), we have \(r=0\). The removable gap has length at least
\[
    \left\lceil
        \frac{\frac{p(p-1)}{g}+\frac{g a(a-1)}2}{p-1}
    \right\rceil
    =
    \left\lceil
        \frac{3p-1-g}{2g}
    \right\rceil.
\]
Subtracting this gap from \(p(p-1)/g\) proves the exact bound.
\end{proof}

\section{Paley quadratic residue construction}\label{sec:paley}

We next use the Paley construction for quadratic residues.  The
case \(p\equiv3\pmod4\) is the Paley difference-set case \cite{1933.Paley}. The case
\(p\equiv1\pmod4\) is the Paley partial difference set case \cite{1994.Ma}.

Let \(p\ge5\) be an odd prime and let
\[
    \cQ_p=\{x^2\pmod p:x\not\equiv0\pmod p\}
\]
be the set of nonzero quadratic residues modulo \(p\).  For a residue
\(t\) modulo \(p\), write
\[
    r_{\cQ_p-\cQ_p}(t)
    :=
    \#\{(a,b)\in \cQ_p^2:b-a\equiv t\pmod p\}.
\]
Paley difference-set and partial-difference-set results give,
for every nonzero residue \(t\),
\[
    r_{\cQ_p-\cQ_p}(t)=
    \begin{cases}
    \dfrac{p-3}{4},
        & p\equiv3\pmod4,\\[6pt]
    \dfrac{p-5}{4},
        & p\equiv1\pmod4\text{ and }t\in\cQ_p,\\[6pt]
    \dfrac{p-1}{4},
        & p\equiv1\pmod4\text{ and }t\notin\cQ_p.
    \end{cases}
\]
Thus
\[
    r_{\cQ_p-\cQ_p}(t)
    \le
    \left\lfloor\frac{p-1}{4}\right\rfloor
\]
for every nonzero residue \(t\) modulo \(p\).

\begin{theorem}\label{thm:paley-modular}
Let \(p\ge5\) be an odd prime and put
\[
    g_p:=\left\lfloor \frac{p-1}{4}\right\rfloor.
\]
The nonzero quadratic residues \(\cQ_p\) form a modular \(g_p\)-Golomb
ruler modulo \(p\), with \((p-1)/2\) marks.
\end{theorem}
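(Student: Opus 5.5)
The statement has two parts: a count of the marks and a bound on difference multiplicities. I will check them in that order, relying on the displayed formula for \(r_{\cQ_p-\cQ_p}(t)\) recorded just before the theorem.

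For the count, the squaring map \(x\mapsto x^2\) on \((\ZZ/p\ZZ)^\times\) is a group homomorphism. Its kernel is \(\{\pm1\}\), which has two elements because \(p\) is odd. Hence its image \(\cQ_p\) has exactly \((p-1)/2\) elements, which are distinct residues modulo \(p\).

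For the multiplicity condition, fix a nonzero residue \(t\) modulo \(p\). The definition of a modular \(g\)-Golomb ruler counts ordered pairs \((a,a')\in\cQ_p^2\) with \(a\ne a'\) and \(a-a'\equiv t\). Since \(t\ne0\), the condition \(a\ne a'\) is automatic. The count is therefore \(r_{\cQ_p-\cQ_p}(-t)\), with the sign coming from the convention \(b-a\) in the definition of \(r\). Each case of the displayed formula holds for every nonzero \(t\), so \(-t\) is covered as well.

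The bound \(r\le g_p\) now follows case by case.
\begin{itemize}
\item If \(p\equiv3\pmod4\), then \(\lfloor (p-1)/4\rfloor=(p-3)/4\), and the count is exactly \((p-3)/4=g_p\).
\item If \(p\equiv1\pmod4\), then \(g_p=(p-1)/4\), and the count is either \((p-5)/4\) or \((p-1)/4\), both at most \(g_p\).
\end{itemize}
This is the inequality \(r_{\cQ_p-\cQ_p}(t)\le\lfloor(p-1)/4\rfloor\) stated above the theorem. The hypothesis \(p\ge5\) guarantees \(g_p\ge1\), so the multiplicity parameter is a positive integer.

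There is no real obstacle here, since the Paley difference-set and partial-difference-set values are taken as given input. If one wanted the argument to be self-contained, the hard step would be deriving those values. I would do that by counting solutions of \(x^2-y^2=t\) via \((x-y)(x+y)=t\), which gives \(p-1\) solutions. I would then correct for the fact that each residue has two square roots, and for whether \(-1\) is a square, which is exactly what distinguishes \(p\equiv1\) from \(p\equiv3\pmod4\).
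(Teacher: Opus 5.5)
Your proposal is correct and takes the same route as the paper. The paper's proof notes that \(\cQ_p\) has \((p-1)/2\) elements and reads the bound \(r_{\cQ_p-\cQ_p}(t)\le g_p\) off the displayed Paley multiplicities; your squaring-homomorphism count, sign-convention remark, and case check for \(p\equiv1,3\pmod 4\) just make explicit what the paper leaves implicit.
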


\begin{proof}
The set \(\cQ_p\) has \((p-1)/2\) elements. The displayed difference
multiplicities show that every nonzero residue modulo \(p\) occurs at
most \(g_p\) times as a difference of two elements of \(\cQ_p\). This is
exactly the modular \(g_p\)-Golomb condition.
\end{proof}

\begin{corollary}\label{cor:paley-gap-bound}
Let \(p\ge5\) be an odd prime and put
\[
    g:=\left\lfloor \frac{p-1}{4}\right\rfloor.
\]
If \(n\le (p-1)/2\) and
\[
    n=ag+r,
    \qquad
    0\le r<g,
\]
then
\[
    G(g,n)
    \le
    p-
    \left\lceil
        \frac{p+\frac{g a(a-1)}2+ra}{n}
    \right\rceil.
\]
The same bound is valid for every larger target multiplicity.
\end{corollary}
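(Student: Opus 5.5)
The plan is to combine Theorem~\ref{thm:paley-modular} with Lemma~\ref{lem:gap-cut}. The only extra step is to pass from the full set \(\cQ_p\) to a subset with exactly \(n\) marks. There are three steps.

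\emph{Step 1 (set-up).} Since \(p\ge5\), we have \(g=\lfloor (p-1)/4\rfloor\ge1\), so the decomposition \(n=ag+r\) with \(0\le r<g\) is well defined. By Theorem~\ref{thm:paley-modular}, \(\cQ_p\) is a modular \(g\)-Golomb ruler modulo \(p\) with \((p-1)/2\) marks.

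\emph{Step 2 (restrict to \(n\) marks).} Given \(n\le (p-1)/2\), choose any subset \(\cC\subseteq\cQ_p\) with \(|\cC|=n\). This is where the argument needs care, because Lemma~\ref{lem:gap-cut} is stated for a modular ruler with \emph{exactly} \(n\) marks. Its bound involves \(n\) both through the average-gap denominator and through the multiplicity correction \(\frac{ga(a-1)}2+ra\). So I will cut the subset \(\cC\) itself, rather than cutting \(\cQ_p\) and then discarding marks.

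To justify this, I observe that the modular \(g\)-Golomb property passes to subsets. For each nonzero residue \(t\) modulo \(p\), the ordered pairs in \(\cC^2\) with difference \(t\) form a subset of the corresponding pairs in \(\cQ_p^2\). Hence
\[
r_{\cC-\cC}(t)\le r_{\cQ_p-\cQ_p}(t)\le g .
\]
So \(\cC\) is a modular \(g\)-Golomb ruler with \(n\) marks modulo \(p\).

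\emph{Step 3 (apply the cut lemma).} Apply Lemma~\ref{lem:gap-cut} to \(\cC\) with \(N=p\). The gaps of \(\cC\) around the circle sum to \(p\). Each gap length is a nonzero modular difference of \(\cC\), so it occurs at most \(g\) times. The lemma's extremal-sum argument therefore forces a gap of length at least
\[
\left\lceil \frac{p+\frac{ga(a-1)}2+ra}{n}\right\rceil .
\]
Cutting at that gap and unwrapping gives an ordinary \(g\)-Golomb ruler with \(n\) marks and the claimed diameter bound on \(G(g,n)\).

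The statement for larger target multiplicities follows from the monotonicity \(G(g',n)\le G(g,n)\) for \(g'\ge g\), exactly as in the last sentence of Lemma~\ref{lem:gap-cut}. Beyond the subset observation in Step 2, I expect no obstacle.
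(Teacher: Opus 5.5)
Your proposal is correct and matches the paper's proof: you invoke Theorem~\ref{thm:paley-modular}, pass to an \(n\)-element subset of \(\cQ_p\), which inherits the modular \(g\)-Golomb property, and then apply Lemma~\ref{lem:gap-cut} with \(N=p\) to that subset. The larger-multiplicity claim follows from monotonicity of \(G\) in \(g\), and cutting the \(n\)-mark subset itself rather than the full set of residues is the same order the paper uses.
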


\begin{proof}
By Theorem~\ref{thm:paley-modular}, \(\cQ_p\) is a modular
\(g\)-Golomb ruler with \((p-1)/2\) marks modulo \(p\). Taking any
\(n\) of these marks gives a modular \(g\)-Golomb ruler with \(n\)
marks modulo \(p\). Lemma~\ref{lem:gap-cut} then gives the stated
ordinary \(g\)-Golomb ruler bound.
\end{proof}
\section{Computational Analysis}\label{sec:computation}

We compare the four construction families on the grid
\[
  1\le g\le 500, \qquad 2\le b\le 500, \qquad n=g+b.
\]
The cases \(b\le1\) are omitted due to being trivial. Thus the comparison contains
\[
    500\cdot 499=249{,}500
\]
nontrivial parameter pairs. For each pair \((g,n)\), we computed the best bound obtained from the cyclic RDS construction, the Ruzsa--Spence construction, the Singer construction, and the Paley quadratic-residue construction. The code used for this computation is available at \cite{Gupta.ModularConstructions}.

The strict winner counts are as follows.
\[
\begin{array}{|l|r|r|}
\hline
\text{Construction} & \text{Wins} & \text{Share} \\
\hline
\text{Ruzsa--Spence}                         & 124{,}157 & 49.76\% \\
\text{Paley quadratic residues}              &  66{,}710 & 26.74\% \\
\text{Cyclic RDS construction}        &  56{,}702 & 22.73\% \\
\text{Singer}                                &   1{,}877 &  0.75\% \\
\text{Ties}                                  &      54   &  0.02\% \\
\hline
\end{array}
\]
 Ruzsa--Spence is the most frequent winner with Paley as the second most frequent winner on this grid, ahead of the cyclic RDS construction.
We also measured the mean winning margin whenever a construction was the unique winner, the mean relative winning margin, and the percentage of grid points at which each construction remained within \(1\%\) or \(5\%\) of the best bound among the four constructions.
\[
\begin{array}{|l|r|r|r|r|}
\hline
\text{Construction}
  & \text{Mean margin}
  & \text{Mean relative margin}
  & \text{Within }1\%\text{ of best}
  & \text{Within }5\%\text{ of best} \\
\hline
\text{Cyclic RDS construction} & 156.07 & 10.01\% & 89.22\% & 99.08\% \\
\text{Ruzsa--Spence}                  &   6.34 &  0.24\% & 73.08\% & 88.20\% \\
\text{Paley quadratic residues}       &  12.62 &  1.13\% & 42.32\% & 45.57\% \\
\text{Singer}                         & 499.63 &  5.20\% &  1.08\% &  1.75\% \\
\hline
\end{array}
\]

So, Ruzsa--Spence wins more often, but its improvement over other constructions is usually small. In contrast, the cyclic RDS construction wins on a smaller portion of
the grid but it usually wins by a larger margin. Moreover, it is almost always close to the best available bound on this grid. Singer plays a smaller role in this range, but remains a useful complementary construction at sparser ranges.

\section*{Acknowledgments}

The author thanks Prof. Kevin O'Bryant for invaluable guidance.


\begin{bibdiv}
\begin{biblist}
\bib{1984.Atkinson&Hassenklover}{article}{
  date = {1984},
  author = {Atkinson, M.~D.},
  author = {Hassenklover, A.},
  title = {Sets of integers with distinct differences},
  journal = {Sch. Comput. Sci.},
  address = {Carleton Univ., Ottawa, Ont., Canada},
  note = {Rep. SCS-TR-63},
}



\bib{1933.Paley}{article}{
  author = {Paley, R.~E.~A.~C.},
  title = {On orthogonal matrices},
  journal = {J. Math. Phys.},
  volume = {12},
  date = {1933},
  pages = {311--320},
  doi = {\doi{10.1002/sapm1933121311}},
}

\bib{1994.Ma}{article}{
  author = {Ma, Siu Lun},
  title = {A survey of partial difference sets},
  journal = {Des. Codes Cryptogr.},
  volume = {4},
  date = {1994},
  pages = {221--261},
  doi = {\doi{10.1007/BF01388454}},
}

\bib{1942.Bose}{article}{
  author = {Bose, R.~C.},
  title = {An affine analogue of Singer's theorem},
  journal = {J. Indian Math. Soc. (N.S.)},
  volume = {6},
  date = {1942},
  pages = {1--15},
}

\bib{1966.ElliottButson}{article}{
  author = {Elliott, J.~E.~H.},
  author = {Butson, A.~T.},
  title = {Relative difference sets},
  journal = {Illinois J. Math.},
  volume = {10},
  date = {1966},
  pages = {517--531},
}

\bib{1945.Bose&Chowla}{article}{
  author  = {Bose, R.~C.},
  author  = {Chowla, S.},
  title   = {On the construction of affine difference sets},
  journal = {Bull. Calcutta Math. Soc.},
  volume  = {37},
  date    = {1945},
  pages   = {107--112},
  review  = {\MRev{7,365g}},
}

\bib{2021.MartosDazaTrujillo}{article}{
    author = {Martos Ojeda, Carlos Andres},
    author = {Daza Urbano, David Fernando},
    author = {Trujillo Solarte, Carlos Alberto},
    title={Near-optimal $g$-Golomb rulers},
    journal={IEEE Access},
    year={2021},
    volume={9},
    pages={65482--65489},
    doi={\doi{10.1109/ACCESS.2021.3075877}}
}
\bib{1986.Atkinson&Santoro&Urrutia}{article}{
  date = {1986},
  author = {Atkinson, M.~D.},
  author = {Santoro, N.},
  author = {Urrutia, J.},
  title = {Integer sets with distinct sums and differences and carrier frequency assignments for nonlinear repeaters},
  journal = {IEEE Transactions on Communications},
  volume = {34},
  number = {6},
  pages = {614--617},
}
\bib{2004.Obryant}{article}{
  author={O'Bryant, Kevin},
  title={A complete annotated bibliography of work related to Sidon sequences},
  journal={Electron. J. Combin.},
  volume={DS11},
  date={2004},
  pages={39},
  review={\MRev{4336213}},
}
\bib{1996.Ruzsa}{article}{
  author = {Ruzsa, Imre Z.},
  title = {Sumsets of Sidon sets},
  journal = {Acta Arith.},
  volume = {77},
  number = {4},
  date = {1996},
  pages = {353--359},
}
\bib{1938.Singer}{article}{
  author = {Singer, James},
  title = {A theorem in finite projective geometry and some applications to number theory},
  journal = {Trans. Amer. Math. Soc.},
  volume = {43},
  number = {3},
  date = {1938},
  pages = {377--385},
  doi = {\doi{10.1090/S0002-9947-1938-1501951-4}},
}
\bib{Gupta.ModularConstructions}{misc}{
  author = {Gupta, Aditya},
  title = {Computational analysis of modular constructions of \(g\)-Golomb rulers},
  date = {2026},
  note = {GitHub repository, \url{https://github.com/AdityaGupta3011/ModularConstructions}},
}
\end{biblist}
\end{bibdiv}
\end{document}